%
\documentclass[a4paper]{article}
\usepackage{amsmath,amsthm,amssymb,mathrsfs}
\usepackage{a4wide}
\usepackage{eucal}
\usepackage{stackrel}

\newcommand{\ie}{\emph{i.e.}}

\newcommand{\cf}{\emph{cf.}}
\newcommand{\etal}{\emph{et al.}}
\newcommand{\Real}{\mathbb{R}}
\newcommand{\Com}{\mathbb{C}}
\newcommand{\sii}{L^2}
\newcommand{\Hilbert}{\mathcal{H}}
\newcommand{\dom}{\mathop{\mathrm{dom}}\nolimits}
\newcommand{\sgn}{\mathop{\mathrm{sgn}}\nolimits}

\renewcommand{\d}{\mathrm{d}}

\newtheorem{theorem}{Theorem}
\newtheorem{lemma}{Lemma}

\newtheorem*{theorem*}{Theorem}

\theoremstyle{definition}
\newtheorem{remark}{Remark}

\begin{document}
%
\title{\Large\textbf{%
From Lieb--Thirring inequalities to spectral enclosures 
for the damped wave equation}}%
\author{David Krej\v{c}i\v{r}{\'\i}k \ and \ Tereza Kurimaiov\'{a}}	
\date{\small 
\begin{quote}
\begin{center}
\emph{
Department of Mathematics, Faculty of Nuclear Sciences and 
Physical Engineering, \\ Czech Technical University in Prague, 
Trojanova 13, 12000 Prague 2, Czech Republic
\\
david.krejcirik@fjfi.cvut.cz \& kurimter@fjfi.cvut.cz
}
\end{center}
\end{quote}
12 August 2020}
\maketitle
\begin{abstract}
\noindent
Using a correspondence between the spectrum of
the damped wave equation and non-self-adjoint Schr\"odinger operators,
we derive various bounds on complex eigenvalues of the former.
In particular, we establish a sharp result that 
the one-dimensional damped wave operator is similar
to the undamped one 
provided that the $L^1$ norm of the (possibly complex-valued)
damping is less than~$2$.
It follows that these small dampings are spectrally undetectable.
%
%
\end{abstract}
%

\section{Introduction}
%
Consider a classical or a quantum system
described by the \emph{damped wave equation}
\begin{equation}\label{dampedEq}
  u_{tt} + a(x) \, u_{t}-\Delta_x u=0
\end{equation}
in the space-time variables  $(x,t) \in \Real^d \times (0,\infty)$,
where the `damping'~$a$ is a complex-valued function. 
The positive (respectively, negative) part of the real part of~$a$
corresponds to the dissipation (respectively, excitation) 
of mechanical or electromagnetic waves, 
while the imaginary part of~$a$ can be interpreted 
as a conservative perturbation of a Dirac (quasi-)particle.

Writing $U := (u,u_t)^T$, it is customary to replace
the scalar second-order equation~\eqref{dampedEq}
by the first-order evolution system $U_t=A_a U$ 
with the matrix-valued \emph{damped wave operator}
\begin{equation}\label{dampedOp}
  A_{a} := 
  \begin{pmatrix}
    0 & 1 \\
    \Delta & -a
  \end{pmatrix}
  \,, \qquad
  \dom A_a := H^2(\Real^d) \times \dot{H}^1(\Real^d)
  \,, 
\end{equation}
acting in the Hilbert space 
$\Hilbert := \dot{H}^1(\Real^d) \times \sii(\Real^d)$.
A careful analysis of the stationary problem 
$A_a\Psi=\mu\Psi$ in~$\Hilbert$,
where $\mu \in \Com$ is a spectral parameter,
provides information on the behaviour of 
the time-dependent solutions~$u$ of~\eqref{dampedEq}. 
It is easy to see that the spectral problem is related 
to the operator pencil 
(see Lemma~\ref{lemma} below)
\begin{equation}\label{pencil}
  S_{\mu a}\psi :=
  -\Delta\psi+\mu \;\! a \;\! \psi = -\mu^2\psi 
  \qquad\mbox{in}\qquad
  \sii(\Real^d) \,.
\end{equation}

As an example of usefulness of the spectral data, 
let us recall the collaboration of the first author
with P.~Freitas~\cite{fk}, where it is shown that 
the damped wave equation~\eqref{dampedEq}
becomes unstable whenever the real-valued damping~$a$
achieves a sufficiently negative minimum.
The strategy of~\cite{fk} is based on establishing  
spectral \emph{asymptotics} of the family 
of Schr\"odinger operators~$S_{\mu a}$ as $\mu \to +\infty$. 
Consequently, $A_a$~possesses a real positive point~$\mu$ in the spectrum,
which is responsible for a global instability of~\eqref{dampedEq}. 
 
The present paper is partially motivated 
by a remark of T.~Weidl~\cite{Weidl-private}
that Lieb--Thirring inequalities known for~$S_{\mu a}$
could provide more quantitative information on
the location of the \emph{real} spectrum of~$A_a$.
Moreover, in view of the unprecedented interest
in non-self-adjoint Schr\"odinger operators in the near past,
new complex extensions of the Lieb--Thirring inequalities
have been derived in recent years. 
Consequently, the T.~Weidl's observation has become 
interesting for the \emph{complex} spectrum of~$A_a$ as well.
In this paper we go beyond the usual setting by  
considering even \emph{complex-valued} dampings.

The literature on damped wave systems is rather extensive
and we restrict ourselves on quoting the following recent works
and refer the interested reader for further references therein.
The relationship between the damped wave and Dirac equations
is discussed on an abstract level in~\cite{Gesztesy-Holden_2011}
and related one-dimensional Lieb-Thirring-type inequalities
can be found in~\cite{Cuenin-Laptev-Tretter_2014,cs}. 
An extensive spectral analysis of the wave operator
with possibly unbounded damping is performed in~\cite{fst}.
Basic eigenvalue bounds for weakly damped wave systems 
can be deduced from~\cite{Freitas_1999}. 
The perturbation of eigenvalues of abstract damped wave operators
has been recently studied in the framework of Krein spaces in
\cite{Nakic-Veselic_2020}.
Resolvent estimates for an abstract dissipative operator
are derived in \cite{Bouclet-Royer_2014,r}.
 
The structure of this paper is as follows. 
In Section~\ref{Sec.pre}
we comment on basic properties of 
the damped wave operator~$A_a$
and state a crucial correspondence between 
its eigenvalue problem 
and the operator pencil~\eqref{pencil}.
Self-adjoint and non-self-adjoint Lieb--Thirring-type 
inequalities are applied in Sections~\ref{Sec.real}
and~\ref{Sec.complex}, respectively.
In particular, in Theorem~\ref{Thm.Davies}
we prove that the point spectrum of
the one-dimensional damped wave operator~$A_a$
is empty provided that $\|a\|_{L^1(\Real)} < 2$.
We interpret this result as that small dampings cannot
be detected by measuring eigenfrequencies of the wave system.
In Section~\ref{simSection} we strengthen this observation 
to a complete lack of spectral detectability
by showing that the non-self-adjoint operator~$A_a$ is actually
similar to the skew-adjoint undamped operator~$A_0$
under the same smallness condition.
 
\section{Preliminaries}\label{Sec.pre}
%
Although the damped wave equation can be made meaningful
for certain unbounded dampings (\cf~\cite{fst}),
our standing hypothesis is that the damping 
$a:\Real^d \to \Com$ is bounded, \ie,
\begin{equation}\label{Ass}
  a \in L^\infty(\mathbb{R}^d)
  \,. 
\end{equation}
Then it is easy to see that there exists a constant~$c$
such that $A_a+c$ is a generator
of a $C^0$-semigroup of contractions (\cf~\cite[App.~B]{fk}),
so~\eqref{dampedEq} is well posed.
Equivalently, $A_a$~is a quasi-m-accretive operator. 
In particular, the operator~$A_a$ is closed,
so its spectrum is well defined.

Notice that~$A_a$ is skew-adjoint (\ie, $iA_a$~is self-adjoint)
if, and only if, $a$~is purely imaginary.
In particular, the undamped operator~$A_0$ is skew-adjoint.
To have this symmetry result, it is important that
we consider the homogeneous Sobolev space $\dot{H}^1(\Real^d)$ 
(defined as the completion of $C_0^\infty(\Real^d)$
with respect to the norm $\psi \mapsto \|\nabla\psi\|$)
in the definition of the Hilbert space~$\Hilbert$,
see~\eqref{dampedOp}.
On the other hand, it is important to keep in mind
that $\dot{H}^1(\Real^d)$ is not a subset of $\sii(\Real^d)$.

With an abuse of notation, 
we denote by~$-\Delta$ the distributional Laplacian
as well as its self-adjoint realisation in $\sii(\Real^d)$
with domain $\dom(-\Delta) := H^2(\Real^d)$.
For any bounded potential $V:\Real^d \to \Com$,
the Schr\"odinger operator $S_V = -\Delta+V$ 
with $\dom S_V = H^2(\Real^d)$ is a well defined
m-sectorial operator.

We say that~$V$ is \emph{vanishing at infinity}
and write $V\xrightarrow[]{\infty}0$,  provided that
\begin{equation*}
  \lim_{R\to+\infty}\|V\|_{L^\infty(\mathbb{R}^d\setminus B_R(0))}=0
  \,, 
\end{equation*}
where $B_R(0)$ denotes the ball of radius $R>0$
centred at the origin.
Then~$V$ is a relatively compact perturbation of~$-\Delta$. 
Consequently,
$
  \sigma_\mathrm{e}(S_V) 
  = \sigma_\mathrm{e}(S_0) 
  = [0,+\infty)
$
for any choice of the essential spectrum 
(namely, $\sigma_\mathrm{e1},\dots,\sigma_\mathrm{e5}$ 
in the notation of \cite[Chapt.~IX]{Edmunds-Evans}).
More specifically, it is clear for the components
$\sigma_\mathrm{e1},\dots,\sigma_\mathrm{e4}$,
which are preserved by relatively compact perturbations,
and the result extends to the widest choice $\sigma_\mathrm{e5}$
because~$S_V$ is m-sectorial 
(\cf~\cite[Prop.~5.4.4]{KS-book}).
 
It is easy to see that the spectrum of 
the undamped operator~$A_0$ is purely continuous 
and equal to the imaginary axis; in particular,
$
  \sigma(A_0) = \sigma_\mathrm{e}(A_0) = i\Real 
$.
If~$a$ is vanishing at infinity, 
then the damping represents
a relatively compact perturbation of~$A_0$.
Consequently,
$
  \sigma_{\mathrm{e}k}(A_a) 
  = \sigma_{\mathrm{e}k}(A_0) = i\Real 
$
for $k=1,\dots,4$. 
To see that it is true also for~$\sigma_\mathrm{e5}$,
it is enough to notice that each connected component
of $\Com \setminus i\Real$ intersects the resolvent set of~$A_a$
due to~\eqref{Ass}. In summary,
\begin{equation*}
  a\xrightarrow[]{\infty}0
  \qquad \Longrightarrow \qquad
  \sigma_{\mathrm{e}}(A_a) 
  = \sigma_{\mathrm{e}}(A_0) = i\Real 
  \,.
\end{equation*}

Since the essential spectrum is independent of~$a$,
the present paper focuses on the point spectrum of~$A_a$.
The following lemma specifies the equivalence between
the spectral problem for~\eqref{dampedOp} 
and the operator pencil~\eqref{pencil}
in the case of eigenvalues (not necessarily discrete).    

\begin{lemma}\label{lemma}
Assume~\eqref{Ass}.
For every $\mu\in\mathbb{C}$,
\begin{equation*}
  -\mu^2\in\sigma_\mathrm{p}(S_{\mu a})
  \quad\Longleftrightarrow\quad
  \mu\in\sigma_\mathrm{p}(A_a)
  \,.
\end{equation*}
\end{lemma}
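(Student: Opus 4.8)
The plan is to establish the equivalence by directly relating eigenvectors of $A_a$ in $\Hilbert$ to eigenfunctions of the pencil $S_{\mu a}$ in $\sii(\Real^d)$. The two-component structure of $A_a$ makes the computation transparent: an eigenvector of $A_a$ for the eigenvalue $\mu$ is a pair $\Psi=(\psi_1,\psi_2)^T$, and writing out $A_a\Psi=\mu\Psi$ componentwise gives the coupled system $\psi_2=\mu\psi_1$ and $\Delta\psi_1-a\psi_2=\mu\psi_2$. Substituting the first equation into the second eliminates the second component and yields $\Delta\psi_1-\mu\;\!a\;\!\psi_1=\mu^2\psi_1$, which is exactly $S_{\mu a}\psi_1=-\mu^2\psi_1$. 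So the forward implication ($\mu\in\sigma_\mathrm{p}(A_a)\Rightarrow-\mu^2\in\sigma_\mathrm{p}(S_{\mu a})$) follows once I verify that $\psi_1$ is a genuine, nonzero eigenfunction of $S_{\mu a}$.

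First I would carry out this componentwise reduction carefully, paying attention to the function spaces. Since $\dom A_a=H^2(\Real^d)\times\dot H^1(\Real^d)$, an eigenvector has $\psi_1\in H^2(\Real^d)$ and $\psi_2\in\dot H^1(\Real^d)$; the relation $\psi_2=\mu\psi_1$ then forces $\psi_1$ to lie in $H^2(\Real^d)=\dom S_{\mu a}$, which is precisely the domain needed for $\psi_1$ to be an eigenfunction of the Schr\"odinger operator $S_{\mu a}$. For the converse, given $\psi\in H^2(\Real^d)\setminus\{0\}$ with $S_{\mu a}\psi=-\mu^2\psi$, I would set $\Psi:=(\psi,\mu\psi)^T$ and check that $\Psi\in\dom A_a$ and $A_a\Psi=\mu\Psi$ by reversing the same computation.

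The one genuine subtlety, and the place I expect to spend the most care, is the nondegeneracy of the correspondence at $\mu=0$ together with the mismatch between $\dot H^1$ and $\sii$. If $\mu=0$, then the eigenvector condition gives $\psi_2=0$ and $\Delta\psi_1=0$, i.e.\ $\psi_1$ is a harmonic function in $H^2(\Real^d)$, hence $\psi_1=0$; thus $0\notin\sigma_\mathrm{p}(A_a)$, and correspondingly $0=-\mu^2\notin\sigma_\mathrm{p}(S_0)$ since $S_0=-\Delta$ has no point spectrum, so the equivalence holds trivially (both sides false). The main point to argue cleanly is therefore that when $\mu\neq0$ the map $\psi_1\mapsto(\psi_1,\mu\psi_1)^T$ is a bijection between the relevant eigenspaces: one must confirm that $\Psi\neq0$ iff $\psi_1\neq0$ (immediate since $\mu\neq0$) and that no information is lost in passing through the homogeneous Sobolev space. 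Because $\psi_1\in H^2(\Real^d)\subset\dot H^1(\Real^d)$ and $\mu\psi_1\in\dot H^1(\Real^d)$ automatically, there is no obstruction, and the equivalence is established for all $\mu\in\Com$.
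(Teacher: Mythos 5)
Your proof is correct and takes essentially the same route as the paper's own: both directions rest on the componentwise reduction $\psi_2=\mu\psi_1$, $\Delta\psi_1-a\,\psi_2=\mu\psi_2$, and the map $\psi\mapsto(\psi,\mu\psi)^T$. Your separate case analysis at $\mu=0$ is careful but not actually needed, since $\psi_1=0$ forces $\psi_2=\mu\psi_1=0$ for \emph{every} $\mu\in\Com$, so nontriviality transfers uniformly in $\mu$ --- which is exactly how the paper's proof handles it without a case split.
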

\begin{proof}
Assuming $-\mu^2\in\sigma_\mathrm{p}(S_{\mu a})$,
there exists a non-trivial function $\psi_1 \in H^2(\Real^d)$ 
such that $S_{\mu a}\psi_1=-\mu^2\psi_1$. 
Then 
$
  \Psi := (\psi_1,\mu\psi_1)^T \in \dom A_{a}
$ 
and
\begin{equation*}
  (A_{a}\Psi)^T
  =\left(\mu\psi_1,\Delta\psi_1- a \mu\psi_1\right)
  =\left(\mu\psi_1,-S_{\mu a}\psi_1\right)
  =\mu\left(\psi_1,\mu\psi_1\right)
  =\mu\Psi^T
  \,.
\end{equation*}
Conversely, assuming $\mu\in\sigma_\mathrm{p}(A_{a})$,
there exists a non-trivial 
$\Psi=(\psi_1,\psi_2)^T \in \dom A_{a}$ 
such that $A_{a}\Psi=\mu\Psi$. 
In other words, 
$\psi_1\in H^2(\mathbb{R}^d)$,
$\psi_2\in \dot{H}^1(\mathbb{R}^d)$ 
and $\psi_2=\mu\psi_1$, 
$\Delta\psi_1- a \psi_2=\mu\psi_2$. 
Combining the latter two equations, 
we get $S_{\mu a}\psi_1=-\mu^2\psi_1$
with $\psi_1 \not= 0$.
\end{proof}
%
 
\section{Real spectrum and real-valued damping}\label{Sec.real}
%
In this section, 
we assume that the damping~$a$ satisfying~\eqref{Ass}
is \emph{real-valued} and focus on 
\emph{real} eigenvalues $\mu\in\mathbb{R}$.
Recalling the correspondence of Lemma~\ref{lemma},
it is enough to consider the auxiliary 
Schr\"odinger operators~$S_V$
with real-valued potentials~$V$.
Then~$S_V$ is self-adjoint, so its spectrum is purely real.
Let us denote by $\{\lambda_n(V)\}_{n=1}^N$
the non-decreasing sequence of negative discrete eigenvalues of~$S_V$,
where each eigenvalue is repeated according 
to its multiplicity.
The set can be either empty ($N=0$) or finite ($1 \leq N < \infty$)
or infinite ($N=\infty$).

Our starting point are the self-adjoint Lieb--Thirring inequalities 
(\cf~\cite{lw,l})
	\begin{equation}\label{realLT}
		\sum_{n=1}^N |\lambda_n(V)|^\gamma
		\le L_{\gamma,d}
		\int_{\mathbb{R}^d}V_{-}^{\gamma+\frac{d}{2}}
		\,,
	\end{equation}
where~$L_{\gamma,d}$ is a positive constant independent of~$V$
and $V_{\pm}:=\frac{1}{2}(|V|\pm V)$.
More specifically, it is known that such a bound holds 
with a \emph{finite} constant~$L_{\gamma,d}$ if, and only if,
\begin{equation}\label{ranges}
\begin{aligned}
  &\gamma\ge\mbox{$\frac{1}{2}$} & \mbox{if} \quad d=1 \,,
  \\
  &\gamma>0 & \mbox{if} \quad d=2 \,,
  \\
  &\gamma\geq 0 & \mbox{if} \quad d \geq 3 \,.
\end{aligned}
\end{equation}
The sharp values of the constants $L_{\gamma,d}$ are not known explicitly 
for all the admissible values of~$\gamma$. 
In special cases, however, one knows that 
$L_{\frac{1}{2},1}=\frac{1}{2}$ and 
$$
  L_{\gamma,d}
  =L_{\gamma,d}^\mathrm{cl}
  :=\displaystyle
  \frac{\Gamma(\gamma+1)}{2^d\pi^{\frac{d}{2}}\Gamma(\gamma+\frac{d}{2}+1)}
  \qquad \mbox{for} \qquad
  d\ge 1,\:\gamma\ge\frac{3}{2}
$$
are the best possible values.
 
A direct combination of the Lieb--Thirring inequalities
with Lemma~\ref{lemma} yields the following basic results.		
	\begin{theorem}\label{theorem1}
Suppose that~$a$ is real-valued and assume~\eqref{Ass}.
Let~$\gamma$ be any number satisfying~\eqref{ranges}.
If~$\mu$ is a positive (respectively, negative) eigenvalue of~$A_a$
and $a_-\in L^{\gamma+\frac{d}{2}}(\mathbb{R}^d)$
(respectively, $a_+\in L^{\gamma+\frac{d}{2}}(\mathbb{R}^d)$), then
		\begin{equation*}
		\mu^{\gamma-\frac{d}{2}}
		\le L_{\gamma,d}
		\int_{\mathbb{R}^d}a_-^{\gamma+\frac{d}{2}} 
		\qquad
		\left(
		\mbox{respectively, }
		(-\mu)^{\gamma-\frac{d}{2}}
		\le L_{\gamma,d}
		\int_{\mathbb{R}^d}a_+^{\gamma+\frac{d}{2}} 
		\right)
		\,.
		\end{equation*}
		On the other hand, 
		if $a_-\in L^d(\mathbb{R}^d)$ 
		(respectively, $a_+\in L^d(\mathbb{R}^d)$)
		and
		\begin{equation*}
		\int_{\mathbb{R}^d}a_-^d<\frac{1}{L_{\frac{d}{2},d}}
		\qquad
		\left(
		\mbox{respectively, }
		\int_{\mathbb{R}^d}a_+^d<\frac{1}{L_{\frac{d}{2},d}} 
		\right)
		\,,
		\end{equation*}
		then~$A_a$ has no positive 
		(respectively, negative) eigenvalues.
	\end{theorem}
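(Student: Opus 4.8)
The plan is to read off the bound from a single term of the Lieb--Thirring inequality~\eqref{realLT}, using the correspondence of Lemma~\ref{lemma}. Suppose first that $\mu>0$ is an eigenvalue of~$A_a$. By Lemma~\ref{lemma}, the negative number $-\mu^2$ lies in the point spectrum of the Schr\"odinger operator $S_{\mu a}=-\Delta+\mu a$. Since $a$~is real-valued and $\mu\in\Real$, the potential $V:=\mu a$ is real-valued, so $S_{\mu a}$ is self-adjoint. I would then observe that the hypothesis $a_-\in L^{\gamma+d/2}(\Real^d)$ makes the negative part $-\mu a_-$ a relatively compact perturbation of the non-negative operator $-\Delta+\mu a_+$; hence $\sigma_\mathrm{e}(S_{\mu a})\subseteq[0,+\infty)$ and the eigenvalue $-\mu^2<0$ is necessarily \emph{discrete}, \ie\ it is one of the $\lambda_n(\mu a)$ entering~\eqref{realLT}.

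Granting this, I would bound the single term $|-\mu^2|^\gamma=\mu^{2\gamma}$ by the whole Lieb--Thirring sum and invoke~\eqref{realLT} with $V=\mu a$:

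\begin{equation*}
  \mu^{2\gamma}
  \le \sum_{n=1}^N |\lambda_n(\mu a)|^\gamma
  \le L_{\gamma,d}\int_{\Real^d}(\mu a)_-^{\gamma+\frac d2}
  = L_{\gamma,d}\,\mu^{\gamma+\frac d2}\int_{\Real^d}a_-^{\gamma+\frac d2}
  \,,
\end{equation*}

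where the last equality is the key computational step: for the \emph{positive} scalar $\mu$ one has $(\mu a)_-=\frac12(|\mu a|-\mu a)=\mu\,a_-$. Dividing through by the positive quantity $\mu^{\gamma+d/2}$ gives $\mu^{\gamma-d/2}\le L_{\gamma,d}\int_{\Real^d}a_-^{\gamma+d/2}$, as claimed. For a negative eigenvalue $\mu<0$ the argument is symmetric: now $(\mu a)_-=(-\mu)\,a_+$ because $\mu<0$, while $|-\mu^2|=(-\mu)^2$, and one reaches $(-\mu)^{\gamma-d/2}\le L_{\gamma,d}\int_{\Real^d}a_+^{\gamma+d/2}$.

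For the second assertion I would simply specialise the first bound to the exponent $\gamma=\frac d2$, which is admissible in~\eqref{ranges} for every $d\ge1$. Since then $\gamma-\frac d2=0$, the bound degenerates to $1\le L_{d/2,d}\int_{\Real^d}a_-^d$ whenever a positive eigenvalue exists. Contrapositively, the strict inequality $\int_{\Real^d}a_-^d<1/L_{d/2,d}$ forbids any positive eigenvalue, and the negative case is identical with $a_+$ in place of $a_-$.

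The proof is a direct combination and involves no serious analytic difficulty. The only step that genuinely needs an argument---and which I regard as the main (if minor) obstacle---is the justification that $-\mu^2$ is counted by the Lieb--Thirring sum, that is, that the point of $\sigma_\mathrm{p}(S_{\mu a})$ supplied by Lemma~\ref{lemma} is indeed a negative \emph{discrete} eigenvalue. Everything else is algebra: the scaling $(\mu a)_-=\mu a_-$ and a single division.
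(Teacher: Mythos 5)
Your proof is correct and follows essentially the same route as the paper: Lemma~\ref{lemma} identifies $-\mu^2$ as a negative eigenvalue of $S_{\mu a}$, the scaling $(\mu a)_-=\mu a_-$ (resp.\ $(-\mu)a_+$) feeds it into~\eqref{realLT}, and the absence statement is the same specialisation $\gamma=\tfrac d2$. Your extra paragraph justifying that $-\mu^2$ is genuinely \emph{discrete} (via relative compactness of $a_-$, so that it is counted by the Lieb--Thirring sum) fills in a point the paper's proof leaves implicit, and is a welcome addition rather than a deviation.
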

\begin{proof}
From Lemma \ref{lemma} we get that 
real $\mu\in\sigma_\mathrm{p}(A_a)$ if, and only if,
there exists $n\in\mathbb{N}$
such that $\lambda_n(\mu a)=-\mu^2$.
 	Hence, \eqref{realLT} implies 
 	\begin{equation}\label{lt-both}
\begin{cases} 	
\displaystyle
	 \mu^{2\gamma} =|\lambda_{n}(\mu a)|^\gamma\le L_{\gamma,d}\:\mu^{\gamma+\frac{d}{2}}\int_{\mathbb{R}^d}a_-^{\gamma+\frac{d}{2}}
	 &\mbox{for}\qquad
	 \mu\in\sigma_\mathrm{p}(A_a) \cap (0,+\infty) \,,
	 \\
\displaystyle
	 |\mu|^{2\gamma}=|\lambda_{n}(\mu a)|^\gamma\le L_{\gamma,d}\:|\mu|^{\gamma+\frac{d}{2}}\int_{\mathbb{R}^d}a_+^{\gamma+\frac{d}{2}}
	 &\mbox{for}\qquad
	 \mu\in\sigma_\mathrm{p}(A_a) \cap (-\infty,0)
	 \,.
\end{cases}	 
	 \end{equation}
This proves the first part of the theorem.
	 The case $0\in\sigma_\mathrm{p}(\mathcal{A}_a)$ 
	 cannot occur because the spectrum of~$S_0$
	 is purely continuous.
	  Dividing~\eqref{lt-both}  by $|\mu|^{\gamma+\frac{d}{2}}$, and eventually putting $\gamma=\frac{d}{2}$, which can be done for all $d\ge 1$, we conclude with the second part of the theorem.
\end{proof}

To continue, we restrict ourselves to $d=1$. 
In this case the Buslaev--Faddeev--Zakharov trace formulae 
(\cf~\cite{fz}) provides us with a lower bound for the sum of 
square roots of the eigenvalues of~$S_{\mu a}$, namely
 	\begin{equation}\label{BFZ}
 	\sum_{n=1}^N|\lambda_n(\mu a)|^{\frac{1}{2}}
 	\ge-\frac{\mu}{4}\int_\mathbb{R}a \,.
 	\end{equation}
Of course, the inequality is non-trivial only if 
$\mu \int_{\Real} a < 0$.
The latter is known to be a sufficient condition 
which guarantees that $\inf\sigma(S_{\mu a}) < 0$.
Assuming in addition $a\xrightarrow[]{\infty}0$,
it follows that~$S_{\mu a}$ possesses at least one negative eigenvalue.
The number of eigenvalues~$N$ of~$S_{\mu a}$
is controlled from above by
the Bargmann bound (\cf~\cite[Problem 22]{s4})
 	\begin{equation*}
 	N\le 1+|\mu|\int_\mathbb{R}|a(x)||x|\:\d x \,.
 	\end{equation*}
Consequently, for~$\mu$ satisfying the inequality 
	\begin{equation}\label{oneeig}
		|\mu|<\left(\int_\mathbb{R}|a(x)||x|\:\d x\right)^{-1}
	\end{equation} 
 	the operator $S_{\mu a}$ has exactly one negative eigenvalue $\lambda_1(\mu a)$ and we get from~\eqref{BFZ} the estimate
 	\begin{equation}\label{bfz}
 	|\mu|=|\lambda_{1}(\mu a)|^\frac{1}{2}
 	\ge-\frac{\mu}{4}\int_\mathbb{R}a
 	\,.
 	\end{equation}
 	This implies $\int_\mathbb{R}a\ge -4$ for $\mu>0$ and $\int_\mathbb{R}a\le 4$ for $\mu<0$. 
 	In summary, we have established the following result.
\begin{theorem}\label{theorembfz}
Let $d=1$.
Suppose that~$a$ is real-valued 
and in addition to~\eqref{Ass}
assume $a\in L^1(\mathbb{R},|x| \, \d x)$ 
and $a\xrightarrow[]{\infty}0$.
Let~$\mu$ be a real eigenvalue of~$A_a$. 
If $\mu>0$ and $\int_\mathbb{R} a<-4$ 
(or $\mu<0$ and $\int_\mathbb{R}a>4$), then
		\begin{equation*}
		|\mu|\ge\left(\int_\mathbb{R}|a(x)||x|\:\d x\right)^{-1}.
		\end{equation*}
	\end{theorem}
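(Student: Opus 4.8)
The plan is to argue by contrapositive, using the three ingredients already assembled in the excerpt: the eigenvalue correspondence of Lemma~\ref{lemma}, the Bargmann bound on the number of negative eigenvalues, and the Buslaev--Faddeev--Zakharov trace formula~\eqref{BFZ}. Suppose that $\mu$ is a real eigenvalue of $A_a$ with $\mu>0$ and $\int_\mathbb{R} a<-4$, and suppose \emph{for contradiction} that the conclusion fails, i.e.
\begin{equation*}
  |\mu| < \left(\int_\mathbb{R}|a(x)||x|\,\d x\right)^{-1},
\end{equation*}
which is exactly the smallness condition~\eqref{oneeig}.

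First I would invoke Lemma~\ref{lemma}: since $\mu\in\sigma_\mathrm{p}(A_a)$, the Schr\"odinger operator $S_{\mu a}$ has $-\mu^2$ as an eigenvalue, so $-\mu^2=\lambda_n(\mu a)$ for some index $n$; in particular $S_{\mu a}$ has \emph{at least one} negative eigenvalue. Next, under the smallness assumption~\eqref{oneeig}, the Bargmann bound
\begin{equation*}
  N \le 1 + |\mu|\int_\mathbb{R}|a(x)||x|\,\d x < 2
\end{equation*}
forces $N=1$: the operator $S_{\mu a}$ has \emph{exactly} one negative eigenvalue $\lambda_1(\mu a)$, and it must coincide with $-\mu^2$. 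Then the trace formula~\eqref{BFZ} reduces to the single-term inequality~\eqref{bfz}, namely
\begin{equation*}
  |\mu| = |\lambda_1(\mu a)|^{1/2} \ge -\frac{\mu}{4}\int_\mathbb{R} a,
\end{equation*}
and since $\mu>0$ we may cancel $\mu=|\mu|$ on both sides to obtain $\int_\mathbb{R} a \ge -4$, contradicting $\int_\mathbb{R} a<-4$. The negative case $\mu<0$ with $\int_\mathbb{R} a>4$ is entirely symmetric: cancelling $|\mu|=-\mu$ yields $\int_\mathbb{R} a \le 4$.

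The argument is essentially a repackaging of the discussion preceding the statement, so the real work is conceptual rather than computational, and I expect no serious obstacle. The one point requiring a little care is the bookkeeping on $N$: I must confirm that $-\mu^2$ is genuinely a \emph{negative discrete} eigenvalue counted by the sequence $\{\lambda_n(\mu a)\}$, which holds because $\mu\neq 0$ (the value $\mu=0$ is excluded, as $0\notin\sigma_\mathrm{p}(A_a)$ by Lemma~\ref{lemma} together with the purely continuous spectrum of $S_0$) and because the hypothesis $a\xrightarrow{\infty}0$ guarantees that the negative spectrum of $S_{\mu a}$ is discrete. The assumption $a\in L^1(\mathbb{R},|x|\,\d x)$ is precisely what makes the Bargmann bound finite, so all hypotheses of the theorem are used exactly where needed.
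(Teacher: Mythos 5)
Your proposal is correct and follows essentially the same route as the paper, whose proof is exactly the discussion preceding the statement: Lemma~\ref{lemma} produces the negative eigenvalue $-\mu^2$ of $S_{\mu a}$, the Bargmann bound under~\eqref{oneeig} forces $N=1$, and the single-term Buslaev--Faddeev--Zakharov inequality~\eqref{bfz} yields $\int_\mathbb{R} a\ge -4$ (resp.\ $\le 4$), so the theorem is the contrapositive. Your extra bookkeeping --- that $-\mu^2$ is a genuine negative discrete eigenvalue because $\mu\neq 0$ and $a\xrightarrow{\infty}0$ pins the essential spectrum at $[0,+\infty)$ --- is a correct and welcome explicit version of what the paper leaves implicit.
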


Finally, combining the Lieb--Thirring inequalities 
with the Buslaev--Faddeev--Zakharov trace formulae, 
we obtain the following quantitative bounds on the location of real eigenvalues of the one-dimensional damped wave operator.
The presence of the coupling parameter~$\alpha$ is useful
for studying the stability of solutions of~\eqref{dampedEq}
in the spirit of~\cite{Freitas_1996,Freitas-Zuazua_1996,fk}. 

\begin{theorem}\label{theorem3}
Let $d=1$.
Suppose that~$a$ is real-valued 
and in addition to~\eqref{Ass}
assume $a\in L^1(\mathbb{R},|x| \, \d x)$ 
and $a\xrightarrow[]{\infty}0$.
Let $\int_\mathbb{R}a < 0$ (respectively, $\int_\mathbb{R}a > 0$).
For any $\mu > 0$ (respectively, $\mu < 0$)
satisfying~\eqref{oneeig},
there exists exactly one $\alpha> 0$ satisfying
		\begin{equation*}
		2\left(\int_\mathbb{R}a_-\right)^{-1}
		\le\alpha\le - 4\left(\int_{\mathbb{R}}a\right)^{-1}
\qquad
\left(\mbox{respectively, }
2\left(\int_\mathbb{R}a_+\right)^{-1}
		\le\alpha\le 4\left(\int_{\mathbb{R}}a\right)^{-1}
\right)		
		\end{equation*}
		such that $\frac{\mu}{\alpha}$ 
		is an eigenvalue of $A_{\alpha a}$.
\end{theorem}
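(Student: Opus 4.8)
The plan is to exploit a scaling feature: although the damping of~$A_{\alpha a}$ carries the coupling~$\alpha$, the Schr\"odinger operator attached to it through the pencil does not. Writing $\nu := \mu/\alpha$, the product $\nu \cdot (\alpha a) = \mu a$ is independent of~$\alpha$, so Lemma~\ref{lemma} applied with damping $\alpha a$ and spectral parameter~$\nu$ gives
\begin{equation*}
  \frac{\mu}{\alpha} \in \sigma_\mathrm{p}(A_{\alpha a})
  \quad\Longleftrightarrow\quad
  -\frac{\mu^2}{\alpha^2} \in \sigma_\mathrm{p}(S_{\mu a})
  \,.
\end{equation*}
The entire problem is thus governed by the single \emph{fixed} operator~$S_{\mu a}$. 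I would treat the case $\int_\mathbb{R} a < 0$ and $\mu > 0$; the other case reduces to it by the involution $(\mu,a)\mapsto(-\mu,-a)$, which leaves the product $\mu a$ and all hypotheses invariant while interchanging $a_\pm$ and flipping the sign of $\int_\mathbb{R} a$, thereby turning the asserted enclosure into the one already proved.

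Since $\mu>0$ satisfies~\eqref{oneeig} and $\mu\int_\mathbb{R} a<0$, the discussion preceding Theorem~\ref{theorembfz} shows that $S_{\mu a}$ has exactly one negative eigenvalue $\lambda_1(\mu a)$. As $-\mu^2/\alpha^2<0$ for every $\alpha>0$, the equivalence above forces $-\mu^2/\alpha^2=\lambda_1(\mu a)$, which admits the unique positive solution
\begin{equation*}
  \alpha = \frac{\mu}{|\lambda_1(\mu a)|^{1/2}}
  \,.
\end{equation*}
This already settles existence and uniqueness of the coupling constant (in fact globally, not merely inside the asserted interval). It remains to locate~$\alpha$, equivalently to sandwich $|\lambda_1(\mu a)|^{1/2}=\mu/\alpha$ between two explicit multiples of~$\mu$.

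For the upper bound on~$\alpha$ I would invoke the Buslaev--Faddeev--Zakharov trace formula~\eqref{BFZ}; because $S_{\mu a}$ carries a single negative eigenvalue it reduces to $|\lambda_1(\mu a)|^{1/2}\ge-\frac{\mu}{4}\int_\mathbb{R} a$, and dividing by $\mu>0$ and inverting yields $\alpha\le-4(\int_\mathbb{R} a)^{-1}$. For the lower bound I would use the one-dimensional Lieb--Thirring inequality~\eqref{realLT} with $\gamma=\frac12$ and the sharp constant $L_{1/2,1}=\frac12$; using again that there is one eigenvalue and that $(\mu a)_-=\mu\,a_-$ for $\mu>0$, it reads $|\lambda_1(\mu a)|^{1/2}\le\frac{\mu}{2}\int_\mathbb{R} a_-$, whence $\alpha\ge2(\int_\mathbb{R} a_-)^{-1}$. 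Combining the two inequalities establishes the claimed enclosure. I do not expect a genuine obstacle here: once the scaling observation collapses the whole family $A_{\alpha a}$ onto the fixed spectrum of $S_{\mu a}$, the theorem is merely the assembly of the two sharp one-dimensional bounds. The only care required is bookkeeping---that the prefactor~$\mu$ cancels consistently, that the sign identity $(\mu a)_-=\mu a_-$ is invoked only where $\mu>0$, and that the hypothesis $\mu\int_\mathbb{R} a<0$ is precisely what simultaneously guarantees the existence of a negative eigenvalue and renders~\eqref{BFZ} nonvacuous.
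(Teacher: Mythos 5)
Your proposal is correct and takes essentially the same route as the paper: both arguments use the hypotheses $\mu\int_\mathbb{R}a<0$, $a\xrightarrow[]{\infty}0$ and~\eqref{oneeig} to ensure that $S_{\mu a}$ has exactly one negative eigenvalue, and then sandwich it via $-\frac{\mu}{4}\int_\mathbb{R}a\le|\lambda_1(\mu a)|^{1/2}\le\frac{\mu}{2}\int_\mathbb{R}a_-$ using~\eqref{bfz} and~\eqref{lt-both} with $\gamma=\frac{1}{2}$. The only difference is presentational: you make explicit the scaling identity $\nu\cdot(\alpha a)=\mu a$ and the forced relation $\alpha=\mu/|\lambda_1(\mu a)|^{1/2}$, which the paper leaves implicit in its closing appeal to Lemma~\ref{lemma}.
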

\begin{proof}
Assuming $\mu>0$ and $\int_\mathbb{R} a<0$ together with $a\xrightarrow[]{\infty}0$ and \eqref{oneeig}, 
the operator~$S_{\mu a}$ possesses exactly one negative eigenvalue. 
Applying \eqref{lt-both} with $\gamma=\frac{1}{2}$ and \eqref{bfz}, 
we get
 	\begin{equation*}
 	-\frac{\mu}{4}\int_\mathbb{R}a\le|\lambda_1(\mu a)|^{\frac{1}{2}}\le\frac{\mu}{2}\int_{\mathbb{R}}a_-.
 	\end{equation*}
 	Similarly, for $\mu<0$ and $\int_\mathbb{R} a>0$, 
 	we get
 	\begin{equation*}
 	-\frac{\mu}{4}\int_\mathbb{R}a\le|\lambda_1(\mu a)|^{\frac{1}{2}}\le\frac{|\mu|}{2}\int_{\mathbb{R}}a_+.
 	\end{equation*}
 	These estimates together with Lemma~\ref{lemma} prove the theorem.
\end{proof}
%

\section{Complex spectrum and complex-valued damping}\label{Sec.complex}
%
In this section, we use the recent progress in spectral theory
of non-self-adjoint Schr\"odinger operators 
and state results about \emph{complex} eigenvalues 
of the damped wave operator.
There is no obstacle to consider \emph{complex-valued} dampings
at the same time.	

Let us start with dimension $d=1$.
The celebrated result of E.~B.~Davies \etal\ 
(see \cite[Thm.~4]{aad} and \cite[Corol.~2.16]{Davies-Nath_2002})
states that every eigenvalue $\lambda(V)$ of~$S_V$ 
with $V \in L^1(\Real)$ satisfies the bound
\begin{equation}\label{af}
  |\lambda(V)|^\frac{1}{2}
  \le \frac{1}{2}\int_{\mathbb{R}}|V(x)| \, \d x
  \,.
\end{equation}
Moreover, the bound is known to be optimal for step-like 
potentials approximating the Dirac potential.
Now, assuming in addition to~\eqref{Ass} that $a \in L^1(\Real)$,
if~$\mu$ is any eigenvalue of~$A_a$
(necessarily it must be non-zero,
\cf~the proof of Theorem~\ref{theorem1}),
Lemma~\ref{lemma} ensures that there exists 
$\lambda(\mu a)\in\sigma_\mathrm{p}(S_{\mu a})$ such that
\begin{equation*}
		|\mu|=|\lambda_{j}(\mu a)|^\frac{1}{2}
		\le \frac{1}{2}\:|\mu|\int_{\mathbb{R}}|a(x)| \, \d x
		\,,
\end{equation*}
where the inequality is due to~\eqref{af}. 
Dividing by~$|\mu|$, it follows that the point spectrum
of~$A_a$ is empty provided that the $L^1$-norm of~$a$ is small,
namely $\|a\|_{L^1(\Real)} < 2$.
Let us summarise the observation into the following theorem.
\begin{theorem}\label{Thm.Davies}
Let $d=1$.
In addition to~\eqref{Ass} assume $a\in L^1(\mathbb{R})$. 
If $\|a\|_{L^1(\Real)}<2$, then $\sigma_\mathrm{p}(A_a)=\varnothing$. 
Moreover, the constant~$2$ is optimal.  
\end{theorem}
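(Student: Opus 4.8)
The plan is to split the statement into two independent assertions: the emptiness of the point spectrum under the smallness hypothesis, and the optimality of the threshold constant~$2$. The first part is essentially completed by the discussion preceding the theorem statement, so I would simply assemble it. Concretely, I would argue by contradiction: suppose $\mu \in \sigma_\mathrm{p}(A_a)$. By the proof of Theorem~\ref{theorem1} one has $\mu \neq 0$ (the value $0$ is excluded because $\sigma_\mathrm{p}(S_0) = \varnothing$). Lemma~\ref{lemma} then guarantees $-\mu^2 \in \sigma_\mathrm{p}(S_{\mu a})$, and applying the Abramov--Aslanyan--Davies bound~\eqref{af} to the potential $V = \mu a \in L^1(\Real)$ yields
\begin{equation*}
  |\mu| = |{-\mu^2}|^{1/2} \le \frac{1}{2}\int_\Real |\mu a(x)|\,\d x
  = \frac{|\mu|}{2}\,\|a\|_{L^1(\Real)} \,.
\end{equation*}
Dividing by $|\mu| > 0$ gives $\|a\|_{L^1(\Real)} \ge 2$, contradicting the hypothesis $\|a\|_{L^1(\Real)} < 2$. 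Hence no such~$\mu$ exists and $\sigma_\mathrm{p}(A_a) = \varnothing$.

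The genuinely new content, and the part I expect to require real work, is the optimality claim: that the constant~$2$ cannot be improved. The natural strategy is to exhibit, for every $\varepsilon > 0$, a damping~$a$ with $\|a\|_{L^1(\Real)} < 2 + \varepsilon$ (or a sequence with $L^1$ norm tending to~$2$ from above) for which $A_a$ does possess an eigenvalue. The excerpt already signals the right mechanism: the bound~\eqref{af} is optimal for step-like potentials approximating the Dirac delta. So I would reverse-engineer the construction. The equality case of~\eqref{af} for the Dirac-delta potential $V = -c\,\delta$ (with $c>0$) produces the bound state with $|\lambda|^{1/2} = c/2$; one checks the single eigenvalue is $\lambda = -c^2/4$. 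To make this correspond to an eigenvalue of~$A_a$, I must engineer the self-consistency $\lambda(\mu a) = -\mu^2$ required by Lemma~\ref{lemma}, since the potential in the pencil is $\mu a$, not a fixed~$V$. This self-consistency is the crux.

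The cleanest route is to work with the formal delta damping and then regularize. Take $a = -2\,\delta_0$ formally (so that $\|a\|_{L^1} = 2$, the threshold). Then the pencil potential is $\mu a = -2\mu\,\delta_0$, whose Schrödinger operator $-\Delta - 2\mu\,\delta_0$ on $\Real$ has, for $\mathrm{Re}\,\mu > 0$, the single eigenvalue $\lambda = -\mu^2$ with eigenfunction $\psi(x) = e^{-\mu|x|}$; indeed the delta boundary condition $\psi'(0^+) - \psi'(0^-) = -2\mu\,\psi(0)$ reads $-2\mu = -2\mu$, which holds identically for every $\mu$ with $\mathrm{Re}\,\mu>0$. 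Thus the self-consistency $\lambda(\mu a) = -\mu^2$ is satisfied for a whole half-line (or half-plane) of values~$\mu$, and by Lemma~\ref{lemma} each such~$\mu$ is an eigenvalue of $A_a$. Since $\|{-2\delta_0}\|_{L^1} = 2$, this shows the bound is sharp. To stay within the hypothesis $a \in L^\infty \cap L^1$, I would replace $\delta_0$ by an approximating sequence $a_\eta = -2\,\rho_\eta$ with $\rho_\eta \to \delta_0$, $\int \rho_\eta = 1$, $\rho_\eta \ge 0$, so that $\|a_\eta\|_{L^1} = 2$ while $a_\eta \in L^\infty$; a standard perturbative/continuity argument (the eigenvalue depends continuously on the smoothing parameter, and persists for small~$\eta$) then yields genuine eigenvalues of $A_{a_\eta}$ with $\|a_\eta\|_{L^1}$ arbitrarily close to~$2$.

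The main obstacle is precisely this last regularization step: the delta computation is exact and transparent, but turning it into a rigorous statement for admissible (bounded, integrable) dampings requires controlling how the eigenvalue of the pencil $S_{\mu a_\eta}$ varies with~$\eta$ and confirming that the self-consistency equation $\lambda(\mu a_\eta) = -\mu^2$ retains a solution~$\mu$ as $\eta \to 0$. I would handle this either by a direct Birman--Schwinger / resolvent expansion argument in one dimension (where the Birman--Schwinger kernel for $-\Delta$ at energy $-\mu^2$ is explicit, $\tfrac{1}{2\mu}e^{-\mu|x-y|}$) or by an implicit-function-theorem continuity argument anchored at the exact delta solution. The explicit one-dimensional resolvent makes the Birman--Schwinger route concrete and is likely the most economical way to close the optimality claim rigorously.
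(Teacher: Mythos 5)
Your first half (emptiness of the point spectrum for $\|a\|_{L^1(\Real)}<2$) is correct and is exactly the paper's argument: Lemma~\ref{lemma}, exclusion of $\mu=0$, the bound~\eqref{af} applied to $V=\mu a$, and division by $|\mu|$. The gap is in the optimality part, and it is precisely the step you yourself flag as ``the main obstacle.'' Your plan is anchored at the formal delta damping $a=-2\delta_0$, where the self-consistency $\lambda(\mu a)=-\mu^2$ holds \emph{identically} for every $\mu$ with $\Re\mu>0$; but this is exactly what makes a continuity or implicit-function-theorem argument collapse. The linearization of the self-consistency equation in $\mu$ vanishes identically at the anchor (the jump condition reads $-2\mu=-2\mu$), so there is no isolated, simple solution to continue; and for any coupling $\alpha\neq-2$ the delta pencil~\eqref{delta} has \emph{no} admissible solution at all, so the eigenvalue structure is discontinuous in the coupling at the delta level. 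Worse, your proposed regularization keeps $\|a_\eta\|_{L^1}=2$ exactly, but for a bounded damping at the threshold an eigenvalue would force \emph{equality} in~\eqref{af}, which is attained only in the delta limit, not by genuine $L^1\cap L^\infty$ potentials; the paper's own step-potential computation confirms this, since an eigenvalue appears only when $c=\frac{1}{2}\|W\|_{L^1(\Real)}>1$ strictly (at $c=1$ the boundary derivative $G'(0^+)=2a(-1+c)$ vanishes and the sign argument dies). So your sequence $a_\eta$ most likely has no eigenvalue for any $\eta>0$, and moving to norms strictly above $2$ destroys the delta anchor entirely. The paper's Remark~1 explicitly states that no spectral-approximation result for the pencil~\eqref{pencil} by bounded potentials is known --- this is exactly the missing lemma your route would require you to prove from scratch.

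The paper avoids the limit altogether: it takes the step damping $W=a\,\chi_{(-b,b)}$ with $a<0<b$ and solves the pencil eigenvalue problem \emph{exactly}. Matching $H^2$-solutions across $\pm b$ yields an explicit secular equation $F(\mu)=0$ on the only admissible interval $(0,-a)$; setting $G(\mu)=\sqrt{-(\mu a+\mu^2)}\,F(\mu)$, one computes $G(0^+)=G((-a)^-)=0$ with one-sided derivatives $2a(-1+c)$ and $2a(1+c)$, which have the same sign precisely when $c>1$, i.e.\ $\|W\|_{L^1(\Real)}>2$. The intermediate value theorem then produces a genuine zero $\mu^*\in(0,-a)$, hence a true eigenvalue of $A_W$ for every $L^1$ norm strictly above $2$, with no regularization needed. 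To repair your proposal you would essentially have to redo this finite-coupling computation (or develop the unavailable approximation theory for the pencil), so as written the optimality claim is not established.
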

\begin{proof}
It remains to argue about the optimality. 
Our strategy is to show that for any number slightly greater than~$2$ 
there exists a damping~$W$ with the $L^1$-norm equal to this number such that~$A_W$ has an eigenvalue. For this we choose the analytically computable case, where the damping is a step-like potential
	\begin{equation*}
		W(x):=\begin{cases}
		0&\mbox{if}\quad x<-b \,, \\
		a&\mbox{if}\quad -b<x<b \,, \\
		0&\mbox{if}\quad x>b \,,
		\end{cases}
		\qquad\mbox{with}\qquad
		a<0<b \,.
	\end{equation*}
	The eigenvalue equation $A_W\Psi=\mu\Psi$ reduces to 
	$
		\Delta\psi-\mu W\psi-\mu^2\psi=0
	$,
	where $\psi$ is the first component of~$\Psi$. 
It is enough to analyse the situation of real~$\mu$.
For $\mu=0$ or $\mu=-a$ we get just a trivial solution to the eigenvalue equation. Also, using the spectral correspondence of Lemma~\ref{lemma}, 
we know that all the real eigenvalues of~$A_W$ must be positive, provided that $W\le 0$ and $\mu\le\|a\|_{L^\infty(\Real)}=-a$. Thus the only interval where we can find a real eigenvalue is $(0,-a)$. 
	
	Now, for $\psi$ to lie in $H^2(\mathbb{R})\subset C^1(\mathbb{R})$ and be non-trivial, it can be easily verified that the secular equation 
	\begin{equation*}
		F(\mu):=2\sqrt{-(\mu a+\mu^2)} \,
		\cos\big(2b\sqrt{-(\mu a+\mu^2)}\big)
		+(a+2\mu) \, 
		\sin\big(2b\sqrt{-(\mu a+\mu^2)}\big)=0
	\end{equation*}
	must be satisfied. For $\mu\in (0,-a)$ this is equivalent to 
$
  G(\mu):=\sqrt{-(\mu a+\mu^2)}F(\mu)=0
$.
	We compute 
	\begin{align*}
		\lim_{\mu\to 0^+}G(\mu)&=0=\lim_{\mu\to -a^-}G(\mu) \,, \\
		\lim_{\mu\to 0^+}G'(\mu)&=2a(-1+c) \,, \\
		\lim_{\mu\to -a^-}G'(\mu)&=2a(1+c) \,,
	\end{align*}
	where $c:=-ba=\frac{1}{2}\|W\|_{L^1(\Real)}$. 
	We observe that for $c>1$ both the derivatives have the same sign which, together with the fact that the limit points of this continuous function $G(\mu)$ are the same, implies that there exists $\mu^*\in(0,-a)$ such that $G(\mu^*)=0$. Hence, $\mu^*\in\sigma_\mathrm{p}(A_W)$ which proves the desired optimality. 
\end{proof}
\begin{remark}
Taking $b:=(2|a|)^{-1}$ and $\alpha \in \Com$,
the complexified step-like potential~$\alpha W$ 
converges in the sense of distributions 
to $\alpha \delta$ as $a \to - \infty$, 
where~$\delta$ is the Dirac delta function. 
Replacing (formally)~$a$ by~$\alpha\delta$ in~\eqref{pencil},
one arrives at the pencil problem
\begin{equation}\label{delta}
\left\{
\begin{aligned}
  -\psi'' &= -\mu^2\psi && \mbox{in}\quad \Real\setminus\{0\} \,,
  \\
  \psi(0^+) - \psi(0^-) &= 0 \,,
  \\ 
  \psi'(0^+) - \psi'(0^-) &= \mu\alpha \;\! \psi(0) \,.
\end{aligned}
\right.
\end{equation}
There exists no admissible solution $\psi \in H^2(\Real\setminus\{0\})$, 
unless $\alpha=-2$ (respectively, $\alpha=2$)
in which case every $\mu \in \Com$ with $\Re\mu > 0$
(respectively, with $\Re\mu < 0$) 
is an `eigenvalue'!
Since $\|\alpha W\|_{\sii(\Real)}=|\alpha|$,
this is another support for the optimality
of the constant~$2$ in Theorem~\ref{Thm.Davies}.
However, we are not aware of any result about 
a spectral approximation of the operator pencil~\eqref{pencil} 
by bounded potentials. 

The damped wave equation on a \emph{finite} interval
with the damping being a Dirac delta function 
was previously studied in~\cite{Bamberger-Rauch-Taylor_1982}, 
\cite[Sec.~4.1.1]{Ammari-Nicaise} and \cite{Cox-Henrot_2008}.
\end{remark}

In Section~\ref{simSection}, we argue that the absence of eigenvalues 
follows as a consequence of the similarity of~$A_a$ 
to the undamped wave operator~$A_0$, 
provided that $\|a\|_{L^1(\Real)}<2$. 
 
\medskip 
 
In higher dimensions $d \geq 2$,
we use the robust result of R.~Frank \etal\ 
(see~\cite[Thm.~1]{f} and \cite[Thm.~3.2]{fs}) stating that
every eigenvalue $\lambda(V)$ of~$S_V$ 
with $V \in L^{\gamma+\frac{d}{2}}(\Real^d)$,
$0<\gamma\le \frac{1}{2}$,
satisfies the bound
\begin{equation}\label{ff}
  |\lambda(V)|^\gamma
  \le D_{\gamma,d}\int_{\mathbb{R}^d}|V(x)|^{\gamma+\frac{d}{2}} \, \d x
  \,,
\end{equation}
where~$D_{\gamma,d}$ is a constant independent of~$V$.
Using Lemma~\ref{lemma}, it follows that any eigenvalue~$\mu$ 
of~$A_a$ satisfies
	\begin{equation}\label{ff-consequence}
		|\mu|^{2\gamma}=|\lambda_{j}(\mu a)|^\gamma\le D_{\gamma,d} \, |\mu|^{\gamma+\frac{d}{2}}\int_{\mathbb{R}^d}|a(x)|^{\gamma+\frac{d}{2}} \, \d x
		\,.
	\end{equation}
We therefore conclude with the following theorem.	 
\begin{theorem}\label{theorem4}
Let $d \geq 2$.
In addition to~\eqref{Ass} 
assume $a\in L^{\gamma+\frac{d}{2}}(\mathbb{R}^d)$
with $0<\gamma\le\frac{1}{2}$.
There exists a constant~$D_{\gamma,d}$ such that,
for any eigenvalue $\mu\in\sigma_\mathrm{p}(A_a)$, 
one has
		\begin{equation*}
			|\mu|^{\gamma-\frac{d}{2}}\le D_{\gamma,d}\int_{\mathbb{R}^d}|a|^{\gamma+\frac{d}{2}} \,.
		\end{equation*}
	\end{theorem}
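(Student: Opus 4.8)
The plan is to reduce the claim to the non-self-adjoint Lieb--Thirring bound~\eqref{ff} of R.~Frank \etal\ by means of the spectral correspondence of Lemma~\ref{lemma}, exactly in the spirit of the treatment of the case $d=1$ via~\eqref{af} in Theorem~\ref{Thm.Davies}. First I would fix an eigenvalue $\mu \in \sigma_\mathrm{p}(A_a)$ and invoke Lemma~\ref{lemma} to produce a corresponding eigenvalue $\lambda_j(\mu a) = -\mu^2$ of the Schr\"odinger operator $S_{\mu a} = -\Delta + \mu a$. Since $a \in L^{\gamma+\frac{d}{2}}(\Real^d)$ and $\mu$ is a fixed complex number, the potential $V := \mu a$ again lies in $L^{\gamma+\frac{d}{2}}(\Real^d)$, so it is an admissible input in~\eqref{ff}.

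The key step is then the substitution $V = \mu a$ into~\eqref{ff}, together with the scaling identities $|-\mu^2|^\gamma = |\mu|^{2\gamma}$ on the left-hand side and $|\mu a|^{\gamma+\frac{d}{2}} = |\mu|^{\gamma+\frac{d}{2}} |a|^{\gamma+\frac{d}{2}}$ under the integral on the right. This transforms~\eqref{ff} into precisely~\eqref{ff-consequence}. It then remains only to divide both sides by $|\mu|^{\gamma+\frac{d}{2}}$; since the exponent on the left becomes $2\gamma - (\gamma + \frac{d}{2}) = \gamma - \frac{d}{2}$, this delivers the asserted estimate with the very same constant $D_{\gamma,d}$ furnished by~\eqref{ff}.

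The only point demanding a word of care, and the nearest thing to an obstacle, is the legitimacy of dividing by $|\mu|^{\gamma+\frac{d}{2}}$, that is, the fact that $\mu \neq 0$. I would justify this exactly as in the proof of Theorem~\ref{theorem1}: the value $\mu = 0$ cannot belong to $\sigma_\mathrm{p}(A_a)$, for it would force $0 \in \sigma_\mathrm{p}(S_0)$, whereas the free operator $S_0 = -\Delta$ has purely continuous spectrum. Apart from this, no genuine difficulty arises: all the analytic weight is carried by the Frank \etal\ inequality~\eqref{ff}, which we treat as an external black box, and the present argument amounts to a purely algebraic rescaling in the coupling~$\mu$.
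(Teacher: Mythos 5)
Your proposal is correct and coincides with the paper's own argument: the paper likewise substitutes $V=\mu a$ into the Frank \emph{et al.} bound~\eqref{ff}, uses Lemma~\ref{lemma} to identify $-\mu^2$ as an eigenvalue of $S_{\mu a}$, obtains precisely~\eqref{ff-consequence}, and divides by $|\mu|^{\gamma+\frac{d}{2}}$, with $\mu\neq 0$ excluded exactly as in the proof of Theorem~\ref{theorem1}. Nothing is missing; your explicit remark on the legitimacy of the division is the only step the paper leaves implicit.
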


The formal analogue of~\eqref{ff} for $\gamma=0$ and $d \geq 3$ states
that there exists a dimensional positive constant~$D_{0,d}$
such that if 
\begin{equation}\label{ff-absence}
  D_{0,d}\int_{\mathbb{R}^d}|V(x)|^{\frac{d}{2}} \, \d x
  < 1
  \,,
\end{equation}
then~$S_V$ has no eigenvalues.  
The case of discrete eigenvalues is due to R.~Frank \cite[Thm.~2]{f},
while possibly embedded eigenvalues were covered by \cite[Thm.~3.2]{fs}.
Independently, the total absence of eigenvalues 
under a weaker hypothesis for $d=3$
and alternative conditions in higher dimensions
was obtained by L.~Fanelli, 
D.~Krej\v{c}i\v{r}\'ik and L.~Vega in~\cite{fkv}
(see also~\cite{FKV2} and~\cite{CK2,CFK}).
In fact, the spectral stability of~$S_V$ for potentials~$V$
which are small in some sense goes back to the abstract result of T.~Kato's~\cite{k}
that we shall recall for other purposes in the following section.
Here we just mention that a straightforward combination of Lemma~\ref{lemma}
and~\eqref{ff-absence} implies that~\eqref{ff-consequence}
holds also for $\gamma=0$ and $d \geq 3$.
Consequently, we get the following formal analogue of Theorem~\ref{theorem4}.

\begin{theorem}\label{Thm.absence}
Let $d \geq 3$.
In addition to~\eqref{Ass} 
assume $a\in L^{\frac{d}{2}}(\mathbb{R}^d)$.
There exists a positive constant~$D_{0,d}$ such that,
for any eigenvalue $\mu\in\sigma_\mathrm{p}(A_a)$, 
one has
		\begin{equation*}
			|\mu|^{-	\frac{d}{2}}\le D_{0,d}\int_{\mathbb{R}^d}|a|^{\frac{d}{2}} \,.
		\end{equation*}
\end{theorem}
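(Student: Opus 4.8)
The plan is to reproduce verbatim the derivation that yielded Theorem~\ref{theorem4}, but at the degenerate endpoint $\gamma = 0$, replacing the eigenvalue bound~\eqref{ff} by its $\gamma = 0$ counterpart, the absence criterion~\eqref{ff-absence}. In other words, I would combine the spectral correspondence of Lemma~\ref{lemma} with the contrapositive of~\eqref{ff-absence}, exactly as announced in the paragraph preceding the statement.

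First I would record that $\mu \neq 0$ for every $\mu \in \sigma_\mathrm{p}(A_a)$. This is immediate from Lemma~\ref{lemma} together with the fact that $S_0 = -\Delta$ has purely continuous spectrum, precisely as already observed in the proof of Theorem~\ref{theorem1}; hence $0 \notin \sigma_\mathrm{p}(A_a)$ and the division by $|\mu|$ performed below is legitimate.

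Next, fix $\mu \in \sigma_\mathrm{p}(A_a)$. By Lemma~\ref{lemma} the number $-\mu^2$ lies in $\sigma_\mathrm{p}(S_{\mu a})$, so the Schr\"odinger operator $S_{\mu a}$ possesses at least one eigenvalue. I would then invoke the contrapositive of~\eqref{ff-absence} with the potential $V := \mu a \in L^{\frac{d}{2}}(\mathbb{R}^d)$: since $S_{\mu a}$ does have an eigenvalue, the smallness condition must fail, that is,
\[
  1 \le D_{0,d} \int_{\mathbb{R}^d} |\mu a(x)|^{\frac{d}{2}} \, \d x
  = D_{0,d} \, |\mu|^{\frac{d}{2}} \int_{\mathbb{R}^d} |a(x)|^{\frac{d}{2}} \, \d x \,,
\]
where the equality merely pulls out the scalar factor $|\mu|^{\frac{d}{2}}$. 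Dividing by $|\mu|^{\frac{d}{2}} > 0$ then gives the asserted bound $|\mu|^{-\frac{d}{2}} \le D_{0,d} \int_{\mathbb{R}^d} |a|^{\frac{d}{2}}$.

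There is no genuine obstacle in the argument itself, which is a one-line corollary of earlier results; the only point demanding care is that~\eqref{ff-absence} must exclude \emph{all} eigenvalues of $S_{\mu a}$, including possibly embedded ones in $[0,+\infty)$. This is essential because the spectral parameter $-\mu^2$ furnished by Lemma~\ref{lemma} can lie anywhere in~$\Com$ as $\mu$ ranges over $\sigma_\mathrm{p}(A_a)$, so a statement ruling out only discrete eigenvalues would not suffice. This is exactly why the full-strength absence result combining~\cite[Thm.~2]{f} with~\cite[Thm.~3.2]{fs} is the correct input, and I would make sure to cite it in this stronger (embedded-eigenvalue) form.
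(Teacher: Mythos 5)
Your proposal is correct and follows essentially the same route as the paper, which likewise derives the theorem by combining Lemma~\ref{lemma} with the contrapositive of the absence criterion~\eqref{ff-absence} (i.e., extending~\eqref{ff-consequence} to $\gamma=0$, $d\geq 3$) and dividing by $|\mu|^{\frac{d}{2}}$. Your explicit remarks that $\mu\neq 0$ (via the purely continuous spectrum of $S_0$) and that the embedded-eigenvalue version of the absence result from \cite[Thm.~3.2]{fs} is required are both accurate and match the paper's implicit and stated reasoning, respectively.
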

%

\section{Similarity}\label{simSection}
%
In this section, we come back to
the one-dimensional setting of Theorem~\ref{Thm.Davies}.
We find the result appealing because it implies 
that small dampings cannot be distinguished from the undamped system 
just by measuring eigenfrequencies.
In fact, the following result shows a much stronger result 
that small dampings are indeed \emph{spectrally undetectable}.
\begin{theorem}\label{theoremSim}
Let $d=1$.
In addition to~\eqref{Ass} assume $a\in L^1(\mathbb{R})$. 
If $\|a\|_{L^1(\Real)}<2$, then the operators~$A_a$ and~$A_0$ 
are similar to each other.
Moreover, the constant~$2$ is optimal.  
\end{theorem}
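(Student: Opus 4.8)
The plan is to invoke T.~Kato's abstract similarity theorem~\cite{k}, applied to the self-adjoint operator $iA_0$ (recall that $A_0$ is skew-adjoint, so $iA_0$ is self-adjoint with $\sigma(iA_0)=\Real$) together with the bounded perturbation coming from the damping. Writing $iA_a=iA_0+V$ with
\[
  V := i\,(A_a-A_0)
     = i\begin{pmatrix} 0 & 0 \\ 0 & -a \end{pmatrix},
\]
the first step is to factorise $V=-B^*A$ into $iA_0$-smooth pieces. To this end I introduce, for $U=(u_1,u_2)^T\in\Hilbert$, the bounded map $F U:=|a|^{\frac12}u_2\in\sii(\Real)$ and set $A:=i\,\sgn(a)\,F$ and $B:=F$, where $\sgn(a):=a/|a|$ is the unimodular phase of the complex damping. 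Since $a\in L^1(\Real)\cap L^\infty(\Real)$, the weight $|a|^{\frac12}$ lies in $\sii(\Real)\cap L^\infty(\Real)$, so $F$ is bounded with $F^*g=(0,|a|^{\frac12}g)^T$; a direct check gives $B^*A\,U=(0,\,i\,a\,u_2)^T=-V U$, as required.

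The heart of the argument is the computation and estimate of the sandwiched resolvent $A\,(iA_0-z)^{-1}B^*$, uniformly for $z\in\Com\setminus\Real$. Setting $z=i\mu$ with $\Re\mu\neq0$ and solving $(A_0-\mu)(v_1,v_2)^T=(0,h)^T$ gives $v_2=\mu(\Delta-\mu^2)^{-1}h$, whose one-dimensional integral kernel is $-\tfrac12 e^{-\mu|x-y|}$ for $\Re\mu>0$ (and the mirror kernel for $\Re\mu<0$). Hence $A\,(iA_0-z)^{-1}B^*$ is the integral operator on $\sii(\Real)$ whose kernel has modulus
\[
  \tfrac12\,|a(x)|^{\frac12}\,\bigl|e^{-\mu|x-y|}\bigr|\,|a(y)|^{\frac12}
  \le \tfrac12\,|a(x)|^{\frac12}\,|a(y)|^{\frac12},
\]
the crucial point being that the prefactor~$\mu$ cancels. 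Dominating by the rank-one kernel on the right-hand side yields
\[
  \sup_{z\in\Com\setminus\Real}\bigl\|A\,(iA_0-z)^{-1}B^*\bigr\|
  \le \tfrac12\,\bigl\||a|^{\frac12}\bigr\|_{\sii(\Real)}^2
  = \tfrac12\,\|a\|_{L^1(\Real)}.
\]
In particular the factors are $iA_0$-smooth, and this supremum is strictly less than~$1$ precisely when $\|a\|_{L^1(\Real)}<2$.

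With this subcritical bound in hand, Kato's theorem~\cite{k} applies: the Birman--Schwinger-type operator $I+A\,(iA_0-z)^{-1}B^*$ is boundedly invertible uniformly up to the real axis, the resulting similarity transform $Z$ is bounded with bounded inverse, and it intertwines $iA_a=Z\,(iA_0)\,Z^{-1}$, equivalently $A_a=Z\,A_0\,Z^{-1}$. I expect the main obstacle to lie not in the (clean) resolvent computation but in the careful verification of the hypotheses of~\cite{k} in the present non-self-adjoint, first-order matrix setting: in particular, deducing the genuine $iA_0$-smoothness of the factors from the uniform resolvent bound, and handling the homogeneous Sobolev space $\dot H^1(\Real)$ underlying~$\Hilbert$ (so that $Z$ and $Z^{-1}$ are bounded on all of~$\Hilbert$).

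Finally, the optimality of the constant~$2$ requires no new work. A similarity transform preserves the point spectrum, and $\sigma_\mathrm{p}(A_0)=\varnothing$ because $\sigma(A_0)=i\Real$ is purely continuous. By the optimality part of Theorem~\ref{Thm.Davies}, for every number slightly larger than~$2$ there exists a (step-like) damping of that $L^1$-norm whose operator possesses a genuine eigenvalue; such an operator cannot be similar to~$A_0$. Hence the threshold $\|a\|_{L^1(\Real)}<2$ is sharp for similarity as well, which completes the plan.
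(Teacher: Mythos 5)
Your proposal is correct and follows essentially the same route as the paper: the same factorisation of $iA_a-iA_0$ through multiplication by $|a|^{\frac{1}{2}}$ (with the unimodular phase $\sgn a$ absorbed into one factor), Kato's abstract similarity theorem applied to the self-adjoint operator $iA_0$, the same resolvent computation in which the spectral parameter cancels against the kernel of $(\Delta-\mu^2)^{-1}$, and the same appeal to the step-potential construction of Theorem~\ref{Thm.Davies} for the optimality of the constant~$2$. The only cosmetic difference is that you bound the sandwiched resolvent by dominating its kernel pointwise with the rank-one kernel $\tfrac{1}{2}|a(x)|^{\frac{1}{2}}|a(y)|^{\frac{1}{2}}$, whereas the paper estimates the Hilbert--Schmidt norm; both yield the identical uniform bound $\tfrac{1}{2}\|a\|_{L^1(\Real)}$.
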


Here the similarity means that there exists an operator
$W \in \mathscr{B}(\Hilbert)$ such that $W^{-1} \in \mathscr{B}(\Hilbert)$
and $A_a=WA_0W^{-1}$.
In other words, $iA_a$~is \emph{quasi-self-adjoint}
(\cf~\cite{KS-book}), because $iA_0$~is self-adjoint.
Then the spectra of~$A_a$ and~$A_0$ coincide.
In particular, $A_a$~must have the same eigenvalues as~$A_0$. 
Since the spectrum of~$A_0$ is purely continuous,
Theorem~\ref{Thm.Davies} follows as a direct consequence 
of Theorem~\ref{theoremSim}.

\begin{proof}
The optimality of the constant~$2$ follows from the proof
of Theorem~\ref{Thm.Davies}. Indeed, for any number strictly 
larger than~$2$, there exists a damping~$a$ whose $L^1$-norm 
equals that number, while~$A_a$ possesses eigenvalues. 
This would violate the similarity.

To prove the similarity, we use the abstract result 
of T.~Kato \cite[Theorem 1.5]{k}.
Writing 
\begin{equation*}
	i A_a = iA_0
	+(-i\;\!\sgn\overline{a} \, B)^* B
	\qquad\mbox{with}\qquad
	B := \begin{pmatrix}0&0\\0&|a|^\frac{1}{2}\end{pmatrix}
	\,,
\end{equation*}
where~$\sgn$ is the complex sign 
(defined by $\sgn f := f/|f|$ if $f \not=0$
and $\sgn f := 0$ if $f =0$),
it is enough to show that the bounded operators~$B$ 
and $-i\;\!\sgn\overline{a} \, B$
are \emph{relatively smooth} with respect to~$iA_0$
(\cf~\cite[Def.~1.2]{k}) and  
$$
  \sup_{\xi \in \Com\setminus\Real}
  \|K_\xi\| < 1 
  \,, \qquad \mbox{where} \qquad
  K_\xi := B R(\xi,iA_0) B i \sgn a 
$$
is the Birman--Schwinger operator.
Here we use the notation $R(\xi,T) := (T-\xi)^{-1}$ 
for the resolvent of an operator~$T$ at point 
$\xi \in \Com \setminus \sigma(T)$. 
The relative smoothness is a rather complicated condition in general,
but it reduces to reasonable criteria 
when~$iA_0$ is self-adjoint (\cf~\cite[Thm.~5.1]{k}).
Using in addition the simple structure
of the intertwining operators~$B$ 
and $-i\;\!\sgn\overline{a} \, B$ in our case, 
everything reduces to verifying the unique condition
\begin{equation}\label{unique}
  \sup_{\xi \in \Com\setminus\Real}
  \|\tilde{K}_\xi\| < 1 
  \,, \qquad \mbox{where} \qquad
  \tilde{K}_\xi := B R(\xi,iA_0) B  
  \,.
\end{equation}

It can be easily verified that
\begin{equation*}
R(\xi,i\mathcal{A}_0)
=
R(\xi^2,-\Delta)
\begin{pmatrix}
\xi 
&i  
\\
i\Delta  
&\xi  
\end{pmatrix}
.
\end{equation*}
Consequently, 
$$
  \tilde{K}_\xi
  = 
  |a|^{\frac{1}{2}} \xi R(\xi^2,-\Delta) |a|^{\frac{1}{2}}
\begin{pmatrix}
0 & 0
\\
0 & 1
\end{pmatrix}
,
$$
and therefore
$$
  \|\tilde{K}_\xi\| 
  = |\xi| \, \left\|
  |a|^{\frac{1}{2}} R(\xi^2,-\Delta) |a|^{\frac{1}{2}}
  \right\|
  ,
$$
where~$\|\cdot\|$ denotes the operator norm both 
in~$\Hilbert$ and~$\sii(\Real)$
on the left- and right-hand side, respectively. 
The latter will be estimated by the Hilbert--Schmidt norm
$\|\cdot\|_{\mathrm{HS}}$. 
To this purpose, we recall the explicit formula 
for the integral kernel of $\mathcal{R}(z,-\Delta)$:
 \begin{equation*}
	 G_z(x,y)=\frac{e^{-\sqrt{-z}\,|x-y|}}{2\sqrt{-z}} \,,
 \end{equation*}
where $x,y \in \Real$,
$z \in \Com\setminus[0,+\infty)$
and the principal branch of the square root is used.
Using the elementary estimate 
$$
  |G_z(x,y)| \leq \frac{1}{2 \sqrt{|z|}}
  \,,
$$ 
we therefore get 
$$
\begin{aligned}
  \|\tilde{K}_\xi\|^2 
  &\leq |\xi|^2 \, \left\|
  |a|^{\frac{1}{2}} R(\xi^2,-\Delta) |a|^{\frac{1}{2}}
  \right\|_{\mathrm{HS}}^2
  \\
  &= |\xi|^2 \int_{\Real\times\Real}
  |a(x)| \, |G_{\xi^2}(x,y)|^2 \, |a(y)|^2 \, \d x \, \d y
  \\
  &\leq \frac{\|a\|_{L^1(\Real)}^2}{4}
  \,, 
\end{aligned}  
$$
where the last bound is independent of~$\xi$.
Recalling~\eqref{unique}, T.~Kato's similarity condition 
\cite[Theorem 1.5]{k} holds provided that 
$\|a\|_{L^1(\Real)} < 2$. 
\end{proof}

\subsection*{Acknowledgment}
The research was partially supported 
by the GACR grants No.~18-08835S and 20-17749X.

%
\bibliography{mybib}

\providecommand{\bysame}{\leavevmode\hbox to3em{\hrulefill}\thinspace}
\providecommand{\MR}{\relax\ifhmode\unskip\space\fi MR }
\providecommand{\MRhref}[2]{%
  \href{http://www.ams.org/mathscinet-getitem?mr=#1}{#2}
}
\providecommand{\href}[2]{#2}
\begin{thebibliography}{10}

\bibitem{aad}
A.~A. Abramov, A.~Aslanyan, and E.~B. Davies, \emph{Bounds on complex
  eigenvalues and resonances}, Journal of Physics A: Mathematical and General
  \textbf{34} (2001), no.~1, 57.

\bibitem{Ammari-Nicaise}
K~Ammari and S.~Nicaise, \emph{Stabilization of elastic systems by collocated
  feedback}, Springer, 2015.

\bibitem{Bamberger-Rauch-Taylor_1982}
A.~Bamberger, J.~Rauch, and M.~Taylor, \emph{A model for harmonics on stringed
  instruments}, Arch. Ration. Mech. Anal. \textbf{79} (1982), 267--290.

\bibitem{Bouclet-Royer_2014}
J.-M. Bouclet and J.~Royer, \emph{Local energy decay for the damped wave
  equation}, J. Funct. Anal. \textbf{266} (2014), 4538--4615.

\bibitem{CFK}
L.~Cossetti, L.~Fanelli, and D.~Krej\v{c}i\v{r}\'ik, \emph{Absence of
  eigenvalues of {D}irac and {P}auli {H}amiltonians via the method of
  multipliers}, Comm. Math. Phys. (2020), to appear.

\bibitem{CK2}
L.~Cossetti and D.~Krej\v{c}i\v{r}\'ik, \emph{Absence of eigenvalues of
  non-self-adjoint {R}obin {L}aplacians on the half-space}, Proc. London. Math.
  Soc. \textbf{121} (2020), 584--616.

\bibitem{Cox-Henrot_2008}
S.~J. Cox and A.~Henrot, \emph{Eliciting harmonics on strings}, ESAIM: Control,
  Optimisation and Calculus of Variations \textbf{14} (2008), no.~4, 657--677.

\bibitem{Cuenin-Laptev-Tretter_2014}
J.-C. Cuenin, A.~Laptev, and Ch. Tretter, \emph{Eigenvalue estimates for
  non-selfadjoint {D}irac operators on the real line}, Ann. Henri Poincar{\'e}
  \textbf{15} (2014), 707--736.

\bibitem{cs}
J.-C. Cuenin and P.~Siegl, \emph{Eigenvalues of one-dimensional
  non-self-adjoint {D}irac operators and applications}, Lett. Math. Phys.
  \textbf{108} (2018), no.~7, 1757--1778.

\bibitem{Davies-Nath_2002}
E.~B. Davies and J.~Nath, \emph{Schr{\"o}dinger operators with slowly decaying
  potentials}, J. Comput. Appl. Math. \textbf{148} (2002), 1--28.

\bibitem{Edmunds-Evans}
D.~E. Edmunds and W.~D. Evans, \emph{Spectral theory and differential
  operators}, Oxford University Press, Oxford, 1987.

\bibitem{fz}
L.~D. Faddeev. and V.~E. Zakharov, \emph{Korteweg–{D}e {V}ries {E}quation:
  {A} {C}ompletely {I}ntegrable {H}amiltonian {S}ystem}, Fifty Years of
  Mathematical Physics (2016), 277--284.

\bibitem{fkv}
L.~Fanelli, D.~Krej{\v{c}}i\v{r}\'ik, and L.~Vega, \emph{Spectral stability of
  {S}chr{\"o}dinger operators with subordinated complex potentials}, J. Spectr.
  Theory. \textbf{8} (2018), no.~2, 575--604.

\bibitem{FKV2}
L.~Fanelli, D.~Krej\v{c}i\v{r}\'{\i}k, and L.~Vega, \emph{Absence of
  eigenvalues of two-dimensional magnetic {S}chr{\"o}dinger operators}, J.
  Funct. Anal. \textbf{275} (2018), 2453--2472.

\bibitem{f}
R.~L. Frank, \emph{Eigenvalue bounds for {S}chr{\"o}dinger operators with
  complex potentials}, Bull. London Math. Soc. \textbf{43} (2011), no.~4,
  745--750.

\bibitem{fs}
R.~L. Frank and B.~Simon, \emph{Eigenvalue bounds for {S}chr{\"o}dinger
  operators with complex potentials. {II}}, J. Spectr. Theory. \textbf{7}
  (2017), no.~3, 633--658.

\bibitem{Freitas_1996}
P.~Freitas, \emph{On some eigenvalue problems related to the wave equation with
  indefinite damping}, J.~Differential Equations \textbf{127} (1996), no.~1,
  320--335.

\bibitem{Freitas_1999}
P.~Freitas, \emph{Spectral sequences for quadratic pencils and the inverse
  spectral problem for the damped wave equation}, J. Math. Anal. Appl.
  \textbf{78} (1999), no.~9, 965--980.

\bibitem{fk}
P.~Freitas and D.~Krej{\v{c}}i\v{r}\'ik, \emph{Instability results for the
  damped wave equation in unbounded domains}, J. Differential Equations
  \textbf{211} (2005), no.~1, 168--186.

\bibitem{fst}
P.~Freitas, P.~Siegl, and Ch. Tretter, \emph{The damped wave equation with
  unbounded damping}, J. Differential Equations \textbf{264} (2018), no.~12,
  7023--7054.

\bibitem{Freitas-Zuazua_1996}
P.~Freitas and E.~Zuazua, \emph{Stability results for the wave equation with
  indefinite damping}, J.~Differential Equations \textbf{132} (1996), 338--352.

\bibitem{Gesztesy-Holden_2011}
F.~Gesztesy and H.~Holden, \emph{The damped string problem revisited}, J.
  Differential Equations \textbf{251} (2011), 1086--1127.

\bibitem{k}
T.~Kato, \emph{Wave operators and similarity for some non-selfadjoint
  operators}, Math. Ann. \textbf{162} (1966), 258--279.

\bibitem{KS-book}
D.~Krej\v{c}i\v{r}\'{\i}k and P.~Siegl, \emph{Elements of spectral theory
  without the spectral theorem}, In \emph{Non-selfadjoint operators in quantum
  physics: Mathematical aspects} (432 pages), F. Bagarello, J.-P. Gazeau, F. H.
  Szafraniec, and M. Znojil, Eds., Wiley-Interscience, 2015.

\bibitem{l}
A.~Laptev, \emph{Spectral inequalities for {P}artial {D}ifferential {E}quations
  and their applications.}, AMS/IP Stud. Adv. Math \textbf{51} (2012),
  629--643.

\bibitem{lw}
A.~Laptev and T.~Weidl, \emph{Recent results on {L}ieb-{T}hirring
  inequalities}, Journ{\'e}es {\'E}quations aux d{\'e}riv{\'e}es partielles
  (2000), 1--14.

\bibitem{Nakic-Veselic_2020}
I.~Naki{\'c} and K.~Veseli{\'c}, \emph{Perturbation of eigenvalues of the
  {K}lein--{G}ordon operators}, Rev. Mat. Complut. \textbf{33} (2020),
  557--581.

\bibitem{s4}
M.~Reed and B.~Simon, \emph{Methods in mathematical physics, vol. {IV}:
  Analysis of operators}, Academic Press, 1978.

\bibitem{r}
J.~Royer, \emph{Local decay for the damped wave equation in the energy space},
  J. Inst. Math. Jussieu (2016), 1--32.

\bibitem{Weidl-private}
T.~Weidl, private communication in Stuttgart (April 2005).

\end{thebibliography}
\bibliographystyle{amsplain}

\end{document}